\newcommand{\vast}{\bBigg@{3}}
\newcommand{\Vast}{\bBigg@{26}}
\shorttitle{Counting state visits in Markov chains} 
\begin{document}

\title{A Note on Exact State Visit Probabilities in Two-State Markov Chains} 

\authorone[Indian Institute of Technology Delhi]{Mohammad Taha Shah} 

\addressone{Bharti School, IIT Delhi, Hauz Khas, New Delhi, India, 110016} 
\emailone{tahashah@dbst.iitd.ac.in} 

\begin{abstract}
In this note we derive the exact probability that a specific state in a two-state Markov chain is visited exactly $k$ times after $N$ transitions. We provide a closed-form solution for $\mathbb{P}(N_l = k \mid N)$, considering initial state probabilities and transition dynamics. The solution corrects and extends prior incomplete results, offering a rigorous framework for enumerating state transitions. Numerical simulations validate the derived expressions, demonstrating their applicability in stochastic modeling.
\end{abstract}

\keywords{Markov Chains}


\section{Introduction}
Markov chains are fundamental tools in stochastic processes, providing a versatile framework for modeling systems that transition between a finite or countable number of states~\cite{norris1998markov}. This brief communication examines a two-state Markov chain, where the states are denoted as $S_l$ for $l \in \{0, 1\}$. Transitions between these states are governed by fixed probabilities, and the chain's behavior depends on these transition probabilities and the initial state distribution.

This work addresses the problem of calculating the probability that either state $S_0$ or $S_1$ is visited exactly $k$ times after $N$ transitions. The motivation for this investigation stems from the lack of a straightforward analytical result in the literature. During our exploration, we encountered a related discussion on StackExchange~\cite{mathStack}, which provided valuable insights and inspired our approach. However, the solution presented there contained errors and was incomplete. We have rectified these issues and present a comprehensive derivation supported by examples and numerical simulations\footnote{The codes for this note can be found in~\cite{markov_repo}}. This problem has significant applications in fields such as queuing theory, statistical physics, and bandit problems, where understanding state visit frequencies is crucial for analyzing system behavior.

The probability that state $S_l$ is visited exactly $k$ times after $N$ steps is denoted as $\mathbb{P}(N_l = k \mid N)$, where $l \in \{0, 1\}$. The Markov chain's initial state is determined by two possible states, $S_1$ and $S_0$, with corresponding probabilities $p_1$ and $p_0$. Consecutive transitions that keep the process in the same state are counted as additional visits to that state. The transition probability from state $S_i$ to state $S_j$ is denoted as $p_{ij}$, as shown in Fig.~\ref{fig:markov-chain}. Despite its apparent simplicity, determining the exact probability requires careful enumeration of state transitions.

\begin{figure}[t]
\centering
\begin{tikzpicture}[scale=1.2, transform shape, ->, >=stealth', auto, semithick, node distance=3cm]
\tikzstyle{every state}=[fill=none,draw=black,text=black]

\node[state] (A) {$S_0$};
\node[state] (B) [right of=A] {$S_1$};

\path (A) edge [loop left] node{$p_{00}$} (A)
          edge [bend left] node{$p_{01}$} (B)
      (B) edge [loop right] node{$p_{11}$} (B)
          edge [bend left] node{$p_{10}$} (A);          
\end{tikzpicture}
\caption{State diagram of a two-state Markov chain.}
\label{fig:markov-chain}
\end{figure}
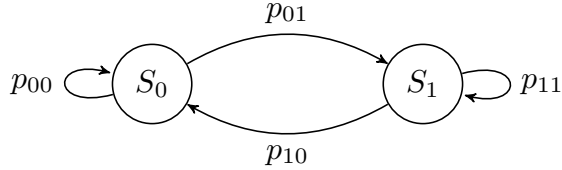

\section{Problem Formulation}
For our analysis, we derive the probability that state $S_1$ is visited exactly $k$ times after $N$ steps, i.e., $\mathbb{P}(N_1 = k \mid N)$. This analysis can be easily adapted for $S_0$. Quantifying visits to state $S_1$ involves enumerating state transitions originating from $S_1$. Specifically, it entails determining the frequency of transitions $S_1 \rightarrow S_1$ and $S_0 \rightarrow S_1$. Before considering transitions, the initial state of the Markov chain is either $S_0$ or $S_1$ with probabilities $p_0$ or $p_1$, respectively. This initial transition to state $S_0$ or $S_1$ is also considered a visit. For example, for $N=2$, if the initial state is $S_1$, then $\mathbb{P}(N_1 = 1 \mid N) = p_1 p_{10}$, and if the initial state is $S_0$, then $\mathbb{P}(N_1 = 1 \mid N) = p_0 p_{01}$. In the first case, state $S_1$ has already been visited once, while in the second case, the chain must transition from $S_0$ to $S_1$. 
The following theorem formalizes the problem.

\begin{theorem}
\label{th:main_th}
Let $S_0$ and $S_1$ be the two states of a Markov chain, with transition probabilities $p_{00}$, $p_{01}$, $p_{10}$, and $p_{11}$, and initial state probabilities $p_0$ and $p_1$ for $S_0$ and $S_1$, respectively. The probability that state $S_1$ is visited exactly $k$ times after $N$ transitions is given by:
\begin{align}
    \mathbb{P} (N_1 = k \mid N) = p_1 \mathbb{P}_1 (k, N \mid S_1) + p_0 \mathbb{P}_2 (k, N \mid S_0)
    \label{eq:main}
\end{align}
where $\mathbb{P}_1 (k, N \mid S_1)$ and $\mathbb{P}_2 (k, N \mid S_0)$ are defined as follows:
\begin{align}
    \mathbb{P}_1 \left(k, N | S_1\right) = 
    \begin{cases}
        0 & k = 0 \\
        \sum_{j=1}^{c_1\left(k,N\right)} \binom{k-1}{j-1} p_{11}^{k-j} p_{10}^j \times \binom{N-k-1}{j-1} p_{01}^{j-1} p_{00}^{N-k-j} + \\ \hspace*{1cm} \sum_{j=1}^{c_2\left(k,N\right)} \binom{k-1}{j} p_{11}^{k-j-1} p_{10}^j \times \binom{N-k-1}{j-1} p_{01}^{j} p_{00}^{N-k-j} & 0 < k < N \\
        p_{11}^{N-1} & k=N
    \end{cases}
    \label{eq:eq_A}
\end{align}
\begin{align}
    \mathbb{P}_2 \left(k, N | S_0\right) = 
    \begin{cases}
        p_{00}^{N-1} & k = 0 \\
        \sum_{j=1}^{c_1\left(k,N\right)} \binom{k-1}{j-1} p_{11}^{k-j} p_{10}^{j-1} \times \binom{N-k-1}{j-1} p_{01}^{j} p_{00}^{N-k-j} + \\ \hspace*{1cm} \sum_{j=1}^{c_3\left(k,N\right)} \binom{k-1}{j-1} p_{11}^{k-j} p_{10}^j \times \binom{N-k-1}{j} p_{01}^{j} p_{00}^{N-k-j-1} & 0 < k < N \\
        0 & k=N
    \end{cases}
    \label{eq:eq_B}
\end{align}
Here, $c_1\left(k,N\right) = \min\left(k, N-k\right)$, $c_2\left(k,N\right) = \min\left(k-1, N-k\right)$, and 
\newline $c_3\left(k,N\right) = \min\left(k, N-k-1\right)$.
\end{theorem}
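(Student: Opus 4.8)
The plan is to realize an $N$-step trajectory as a sequence of states $X_1, X_2, \ldots, X_N$, where $X_1$ is the initial state drawn according to $(p_0, p_1)$ and each later state is produced by the transition kernel, and to set $N_1 = \#\{i : X_i = S_1\}$ (so the initial state itself counts as a visit and exactly $N-1$ transitions occur, consistent with the worked $N=2$ example). Conditioning on $X_1$ and applying the law of total probability immediately yields the decomposition of Eq.~\eqref{eq:main}, reducing the task to establishing the two closed forms $\mathbb{P}_1(k,N\mid S_1)$ and $\mathbb{P}_2(k,N\mid S_0)$ for the conditional probabilities.

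The central device is a run (block) decomposition. I would write each admissible trajectory as its maximal runs of consecutive equal states; since there are only two states these runs strictly alternate between $S_1$-blocks and $S_0$-blocks, and for $0 < k < N$ both symbols occur. If the $k$ copies of $S_1$ are distributed among $j$ nonempty runs, the number of such compositions is $\binom{k-1}{j-1}$, and likewise $N-k$ copies of $S_0$ in $m$ runs gives $\binom{N-k-1}{m-1}$. The probability weight of a fixed trajectory factorizes across runs and boundaries: an $S_1$-run of length $\ell$ carries $\ell-1$ self-loops, so all $S_1$-runs together contribute $p_{11}^{k-j}$, all $S_0$-runs contribute $p_{00}^{(N-k)-m}$, each $S_1\to S_0$ boundary contributes $p_{10}$, and each $S_0\to S_1$ boundary contributes $p_{01}$. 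The key observation making the sum tractable is that this weight depends only on the run counts $j,m$ and on the number of boundaries of each type, not on the individual run lengths; hence the two compositions may be summed independently, each supplying its own binomial factor.

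Next I would split according to the first and last symbols of the trajectory, which fixes the relation between $j$ and $m$ and the boundary counts. For $\mathbb{P}_1$ (start at $S_1$): ending at $S_0$ forces $m=j$ with $j$ boundaries $S_1\to S_0$ and $j-1$ boundaries $S_0\to S_1$, while ending at $S_1$ forces $m=j-1$ with $j-1$ boundaries of each type. Substituting weights and compositions, the first case produces exactly $\binom{k-1}{j-1} p_{11}^{k-j} p_{10}^{j} \binom{N-k-1}{j-1} p_{01}^{j-1} p_{00}^{N-k-j}$, matching the first sum of Eq.~\eqref{eq:eq_A}; reindexing the second case by the number $m$ of $S_0$-runs reproduces its second sum. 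The mirror-image analysis for $\mathbb{P}_2$ (start at $S_0$: end at $S_1$ giving $m=j$, or end at $S_0$ giving $m=j+1$) yields the two sums of Eq.~\eqref{eq:eq_B}. The summation limits follow from the nonemptiness constraints $1\le j\le k$ and $1\le m\le N-k$ together with the fixed $m$-$j$ relation in each case, producing $c_1=\min(k,N-k)$, $c_2=\min(k-1,N-k)$, and $c_3=\min(k,N-k-1)$. Finally, the degenerate endpoints $k=0$ and $k=N$ (a single run, or an impossible configuration) give the stated $p_{00}^{N-1}$, $p_{11}^{N-1}$, and $0$.

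The conceptual content is elementary, so the main obstacle is purely bookkeeping: aligning the two summation indices (the second sum in each formula is naturally indexed by the number of runs of the complementary state, which differs by one from the number of $S_1$-runs) and matching every exponent of $p_{11}, p_{10}, p_{01}, p_{00}$ and every binomial coefficient against the target expressions, while getting the asymmetric bounds $c_1, c_2, c_3$ exactly right according to whether the trajectory terminates in $S_0$ or $S_1$.
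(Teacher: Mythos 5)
Your proposal is correct and follows essentially the same route as the paper: conditioning on the initial state, splitting the case $0<k<N$ by the final state, and counting trajectories by distributing visits over alternating blocks, where your compositions of $k$ into $j$ nonempty runs ($\binom{k-1}{j-1}$) are exactly the paper's weak compositions of the $k-j$ self-loop transitions over $j$ ``transition sets.'' The reindexing of the second sum by the number of complementary-state runs and the derivation of the limits $c_1,c_2,c_3$ from nonemptiness constraints also match the paper's argument.
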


\begin{proof}
In~\eqref{eq:main}, the first term $p_1 \mathbb{P}_1 (k, N \mid S_1)$ is the product of the probability of choosing state $S_1$ initially and the probability that state $S_1$ is visited exactly $k$ times after $N-1$ steps, given that the initial state is $S_1$. Similarly, the second term $p_0 \mathbb{P}_2 (k, N \mid S_0)$ is the product of the probability of choosing state $S_0$ initially and the probability that state $S_1$ is visited exactly $k$ times after $N-1$ steps, given that the initial state is $S_0$. We break $\mathbb{P}_1 \left(k, N | S_1\right)$ and $\mathbb{P}_2 \left(k, N | S_0\right)$ into three cases. The first case, $k=0$, occurs when $S_1$ is visited $0$ times, which is only possible if all states visited in $N$ steps are $S_0$. Thus, $\mathbb{P}_1 \left(k, N | S_1\right) = 0$, and $\mathbb{P} (N_1 = k \mid N) = p_0 \mathbb{P}_2 (k, N \mid S_0) = p_0 p_{00}^{N-1}$. Similarly, the last case, $k = N$, occurs when all states visited in $N$ steps are $S_1$. Thus, $\mathbb{P}_2 (k, N \mid S_0) = 0$, and $\mathbb{P} (N_1 = k \mid N) = p_1 \mathbb{P}_1 (k, N \mid S_1) = p_1 p_{11}^{N-1}$. The remaining case, $0 < k < N$, requires careful analysis due to the many possible state transition combinations. We will discuss the upper limits of summation at the end of the proof. 

For $0 < k < N$,
\begin{align}
    \mathbb{P}_1 (k, N \mid S_1) = & \underbrace{\sum_{j=1}^{c_1\left(k,N\right)} \underbrace{\binom{k-1}{j-1} p_{11}^{k-j} p_{10}^j}_{A} \times \underbrace{\binom{N-k-1}{j-1} p_{01}^{j-1} p_{00}^{N-k-j}}_{B}}_{X} + \nonumber\\
    &\hspace*{1.5cm} \underbrace{\sum_{j=1}^{c_2\left(k,N\right)} \underbrace{\binom{k-1}{j} p_{11}^{k-j-1} p_{10}^j}_{C} \times \underbrace{\binom{N-k-1}{j-1} p_{01}^{j} p_{00}^{N-k-j}}_{D}}_{Y}
    \label{eq:eq_1}
\end{align}
and
\begin{align}
    \mathbb{P}_2 (k, N \mid S_0) = & \underbrace{\sum_{j=1}^{c_1\left(k,N\right)} \binom{k-1}{j-1} p_{11}^{k-j} p_{10}^{j-1} \times \binom{N-k-1}{j-1} p_{01}^{j} p_{00}^{N-k-j}}_{L} + \nonumber\\ 
    &\hspace*{1cm} \underbrace{\sum_{j=1}^{c_3\left(k,N\right)} \binom{k-1}{j-1} p_{11}^{k-j} p_{10}^j \times \binom{N-k-1}{j} p_{01}^{j} p_{00}^{N-k-j-1}}_{M}.
    \label{eq:eq_2}
\end{align}

Observing~\eqref{eq:eq_1} and~\eqref{eq:eq_2}, the expressions inside the summations $(X, L)$ and $(Y, M)$ are nearly identical, differing only in the summation limits, and few of the exponents of transition probabilities. Importantly, the first sums $(X, L)$ and the second sums $(Y, M)$ correspond to the final state being $S_0$ and $S_1$, respectively. We provide an explanation for~\eqref{eq:eq_1}, which can be similarly applied to $\mathbb{P}_2 (k, N \mid S_0)$. In~\eqref{eq:eq_1}, the expression inside the first summation $A \text{--} B$ corresponds to the final state being $S_0$. To have exactly $k$ visits to $S_1$, there must be precisely $k$, $S_0\rightarrow S_1$ and/or $S_1\rightarrow S_1$ transitions. Each time the chain is in $S_1$, it can either stay in $S_1$ ($S_1\rightarrow S_1$) or transition to $S_0$ ($S_1\rightarrow S_0$). Similarly, each time the chain is in $S_0$, it can either stay in $S_0$ ($S_0\rightarrow S_0$) or transition to $S_1$ ($S_0\rightarrow S_1$). Each transition from $S_0\rightarrow S_1$ creates a \textit{transition set} (defined later) where the chain can stay in $S_1$ for multiple steps before transitioning back to $S_0$.

In $X$, there are $j$ transitions $S_1\rightarrow S_0$, as the initial state is $S_1$ and the value of $k \neq 0$ or $N$ there has to be at least one $S_1\rightarrow S_0$ transitions. Given that the final state is $S_0$ and there are $j$ occurrences of the $S_1\rightarrow S_0$ transition, there must be $j-1$ transitions from $S_0 \rightarrow S_1$, as seen in the second summation $B$. The total transitions from $S_1$ to $S_1$ are $k-j$ i.e. given that there are $j$, $S_1\rightarrow S_0$ transitions and there have to be total of $k$ visits to state $S_1$ gives $k-j$, $S_1\rightarrow S_1$ transitions. Now, ff the total number of transitions is $N$, then the number of transitions from $S_0$ to $S_0$ must be $N$ minus the total transitions from $S_1\rightarrow S_1$, $S_1\rightarrow S_0$ and $S_1\rightarrow S_0$. Notice that in $X$, the sum of all exponents of transition probabilities is $N-1$ (i.e., $(k-1)+j+(j-1)+(N-k-j)$), and the same holds for $Y$ and $\mathbb{P}_2 \left(k, N | S_0\right)$. There are $N-1$ steps because the first step is choosing state $S_1$ or $S_0$.

A transition set is a sequence of consecutive transitions that occur within a specific state either $S_0$ or $S_1$ in a Markov chain. These transitions are grouped into sets based on the transitions between states $S_0\rightarrow S_1$ or $S_1\rightarrow S_0$ respectively. Now, focusing on $A$ and $B$, if there are $x$ transitions from $S_0\rightarrow S_1$, there will be $x+1$ transition sets of $S_1\rightarrow S_1$. The total number of $S_1\rightarrow S_1$ transition sets is $j$, as it is the number of $S_0\rightarrow S_1$ transitions plus one, i.e., $j-1+1$. The total number of $S_1\rightarrow S_1$ transitions is $k-j$, distributed across the $j$ transition sets created by $S_0\rightarrow S_1$ transitions. The distribution of $k-j$ transitions across $j$ transition sets is a problem of weak composition. A weak composition of a number $m$ into $n$ parts is a way of writing $m$ as the sum of $n$ non-negative integers. The number of weak compositions of $m$ into $n$ parts is given by $\binom{m+n-1}{n-1}$. In our case, $m=k-j$ and $n=j$, giving the number of weak compositions as $\binom{k-j+j-1}{j-1} = \binom{k-1}{j-1}$. Similarly, for part $B$, each $S_1\rightarrow S_0$ transition creates $j$ transition set where the chain can stay in $S_0$ for multiple steps. The number of ways to distribute $N-k-j$, $S_0\rightarrow S_0$ transitions across these sets is given by the weak composition $\binom{(N-k-j) + j - 1}{j-1} = \binom{N-k-1}{j-1}$. Since the distributions of $S_1\rightarrow S_1$ and $S_0\rightarrow S_0$ transitions are independent, the total number of valid transition sequences is the product of the two weak compositions, as seen in $X$.

Next, consider parts $C \text{--} D$ of~\eqref{eq:eq_1}, which follow a similar process, but the final state here is $S_1$ instead of $S_0$. The derivation of $Y$ follows a similar logic, with equal $j$ transitions from $S_1 \rightarrow S_0$ and $S_0 \rightarrow S_1$. Since the initial state and the final state is $S_1$, there are not $j-1$ transitions $S_0 \rightarrow S_1$ like in $A \text{--} B$ of~\eqref{eq:eq_1}. The total number of $S_1 \rightarrow S_1$ and $S_0 \rightarrow S_0$ transitions are $k-j-1$ and $N-k-j$, respectively. The total number of $S_1 \rightarrow S_1$ transition sets is $j+1$, and the total number of $S_0 \rightarrow S_0$ transition sets is $j$. Thus, the total number of possible combinations of having $k-j-1$, $S_1 \rightarrow S_1$ transitions spread over $j+1$ transition sets is the weak composition of natural numbers that sum to $k-j-1$, i.e., $\binom{(k-j-1)+(j+1)-1}{(j+1)-1} = \binom{k-1}{j}$. Similarly, the total number of possible combinations of having $N-k-j$, $S_0 \rightarrow S_0$ transitions spread over $j$ transition sets is given by $\binom{N-k-1}{j-1}$.

From~\eqref{eq:eq_1} and~\eqref{eq:eq_2}, three upper limits are used in the summations: $c_1(k,N)$, $c_2(k,N)$, and $c_3(k,N)$. Summations $X$ and $L$ share the same upper limit $c_1(k,N)$, while summations $Y$ and $M$ have upper limits $c_2(k,N)$ and $c_3(k,N)$, respectively. These limits combine two sub-cases: $k<\frac{N+1}{2}$ and $k \geq \frac{N+1}{2}$. When $k < \frac{N+1}{2}$, $c_1(k,N) = \min(k,N-k)=k$, $c_2(k,N) = \min(k-1,N-k)=k-1$, and $c_3(k,N) = \min(k,N-k-1)=k$. When $k \geq \frac{N+1}{2}$, $c_1(k,N) = \min(k,N-k)=N-k$, $c_2(k,N) = \min(k-1,N-k)=N-k$, and $c_3(k,N) = \min(k,N-k-1)=N-k-1$. This observation highlights that~\eqref{eq:eq_1} and~\eqref{eq:eq_2} differ in places like $j$ and $j-1$ or $N-k$ and $N-k-1$ depending on the value of $k$. This is because in~\eqref{eq:eq_1}, the initial state assumed is $S_1$, while in~\eqref{eq:eq_2}, the initial state assumed is $S_0$. This explains why there are $j$ and $j-1$ transitions from $S_1 \rightarrow S_0$ and $S_0 \rightarrow S_1$ in~\eqref{eq:eq_1}, respectively, while there are $j-1$ and $j$ transitions from $S_1 \rightarrow S_0$ and $S_0 \rightarrow S_1$ in~\eqref{eq:eq_2}, respectively. The same logic applies to $S_1 \rightarrow S_1$ and $S_0 \rightarrow S_0$ transitions, as seen in summations $Y$ and $M$. This completes the proof of Theorem~\ref{th:main_th} and explains all cases.
\end{proof}

\begin{figure}[t]
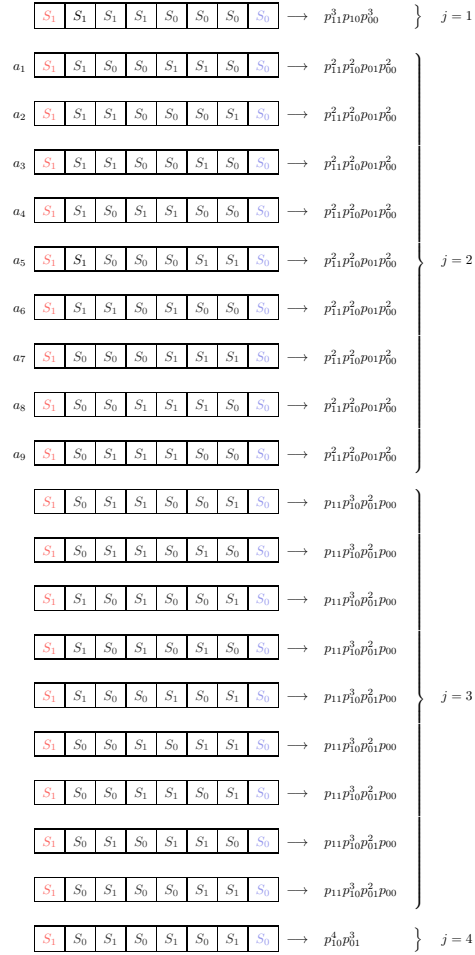

\centering
\scalebox{0.51}{
\centering
\begin{tabular}{lllllllllllll}
\cline{2-9}
\multicolumn{1}{l|}{}  & \multicolumn{1}{|l|}{{\color[HTML]{FD6864} $S_1$}} & \multicolumn{1}{l|}{$S_1$}                        & \multicolumn{1}{l|}{{\color[HTML]{333333} $S_1$}} & \multicolumn{1}{l|}{{\color[HTML]{333333} $S_1$}} & \multicolumn{1}{l|}{{\color[HTML]{333333} $S_0$}} & \multicolumn{1}{l|}{{\color[HTML]{333333} $S_0$}} & \multicolumn{1}{l|}{{\color[HTML]{333333} $S_0$}} & \multicolumn{1}{l|}{{\color[HTML]{9698ED} $S_0$}} & $\longrightarrow$ & $p_{11}^3 p_{10} p_{00}^3$ & $\Big\}$                    & $j=1$                    \\ \cline{2-9}
    &   &   &   &   &   &   &    &    &    &    &   \\ \cline{2-9}
\multicolumn{1}{l|}{$a_1$} & \multicolumn{1}{|l|}{{\color[HTML]{FD6864} $S_1$}} & \multicolumn{1}{l|}{{\color[HTML]{333333} $S_1$}} & \multicolumn{1}{l|}{{\color[HTML]{333333} $S_0$}} & \multicolumn{1}{l|}{{\color[HTML]{333333} $S_0$}} & \multicolumn{1}{l|}{{\color[HTML]{333333} $S_1$}} & \multicolumn{1}{l|}{{\color[HTML]{333333} $S_1$}} & \multicolumn{1}{l|}{{\color[HTML]{333333} $S_0$}} & \multicolumn{1}{l|}{{\color[HTML]{9698ED} $S_0$}} & $\longrightarrow$ & $p_{11}^2 p_{10}^2 p_{01} p_{00}^2$ &                       &                        \\ \cline{2-9}
    &   &   &   &   &   &   &    &    &    &    &   \\ \cline{2-9}
\multicolumn{1}{l|}{$a_2$} & \multicolumn{1}{|l|}{{\color[HTML]{FD6864} $S_1$}} & \multicolumn{1}{l|}{{\color[HTML]{333333} $S_1$}} & \multicolumn{1}{l|}{{\color[HTML]{333333} $S_1$}} & \multicolumn{1}{l|}{{\color[HTML]{333333} $S_0$}} & \multicolumn{1}{l|}{{\color[HTML]{333333} $S_0$}} & \multicolumn{1}{l|}{{\color[HTML]{333333} $S_0$}} & \multicolumn{1}{l|}{{\color[HTML]{333333} $S_1$}} & \multicolumn{1}{l|}{{\color[HTML]{9698ED} $S_0$}} & $\longrightarrow$ & $p_{11}^2 p_{10}^2 p_{01} p_{00}^2$ &                       &                        \\ \cline{2-9}
    &   &   &   &   &   &   &    &    &    &    &   \\ \cline{2-9}
\multicolumn{1}{l|}{$a_3$} & \multicolumn{1}{|l|}{{\color[HTML]{FD6864} $S_1$}} & \multicolumn{1}{l|}{{\color[HTML]{333333} $S_1$}} & \multicolumn{1}{l|}{{\color[HTML]{333333} $S_1$}} & \multicolumn{1}{l|}{{\color[HTML]{333333} $S_0$}} & \multicolumn{1}{l|}{{\color[HTML]{333333} $S_0$}} & \multicolumn{1}{l|}{{\color[HTML]{333333} $S_1$}} & \multicolumn{1}{l|}{{\color[HTML]{333333} $S_0$}} & \multicolumn{1}{l|}{{\color[HTML]{9698ED} $S_0$}} & $\longrightarrow$ & $p_{11}^2 p_{10}^2 p_{01} p_{00}^2$ &                       &                        \\ \cline{2-9}
    &   &   &   &   &   &   &    &    &    &    &   \\ \cline{2-9}
\multicolumn{1}{l|}{$a_4$} & \multicolumn{1}{|l|}{{\color[HTML]{FD6864} $S_1$}} & \multicolumn{1}{l|}{{\color[HTML]{333333} $S_1$}} & \multicolumn{1}{l|}{{\color[HTML]{333333} $S_0$}} & \multicolumn{1}{l|}{{\color[HTML]{333333} $S_1$}} & \multicolumn{1}{l|}{{\color[HTML]{333333} $S_1$}} & \multicolumn{1}{l|}{{\color[HTML]{333333} $S_0$}} & \multicolumn{1}{l|}{{\color[HTML]{333333} $S_0$}} & \multicolumn{1}{l|}{{\color[HTML]{9698ED} $S_0$}} & $\longrightarrow$ & $p_{11}^2 p_{10}^2 p_{01} p_{00}^2$ & &                        \\ \cline{2-9}
     &   &   &   &   &   &   &    &    &    &    &   \\ \cline{2-9}
\multicolumn{1}{l|}{$a_5$} & \multicolumn{1}{|l|}{{\color[HTML]{FD6864} $S_1$}} & \multicolumn{1}{l|}{$S_1$}                        & \multicolumn{1}{l|}{{\color[HTML]{333333} $S_0$}} & \multicolumn{1}{l|}{{\color[HTML]{333333} $S_0$}} & \multicolumn{1}{l|}{{\color[HTML]{333333} $S_0$}} & \multicolumn{1}{l|}{{\color[HTML]{333333} $S_1$}} & \multicolumn{1}{l|}{{\color[HTML]{333333} $S_1$}} & \multicolumn{1}{l|}{{\color[HTML]{9698ED} $S_0$}} & $\longrightarrow$ & $p_{11}^2 p_{10}^2 p_{01} p_{00}^2$ &                       &                        \\ \cline{2-9}
    &   &   &   &   &   &   &    &    &    &    &   \\ \cline{2-9}
\multicolumn{1}{l|}{$a_6$} & \multicolumn{1}{|l|}{{\color[HTML]{FD6864} $S_1$}} & \multicolumn{1}{l|}{{\color[HTML]{333333} $S_1$}} & \multicolumn{1}{l|}{{\color[HTML]{333333} $S_1$}} & \multicolumn{1}{l|}{{\color[HTML]{333333} $S_0$}} & \multicolumn{1}{l|}{{\color[HTML]{333333} $S_1$}} & \multicolumn{1}{l|}{{\color[HTML]{333333} $S_0$}} & \multicolumn{1}{l|}{{\color[HTML]{333333} $S_0$}} & \multicolumn{1}{l|}{{\color[HTML]{9698ED} $S_0$}} & $\longrightarrow$ & $p_{11}^2 p_{10}^2 p_{01} p_{00}^2$ &                       &                        \\ \cline{2-9}
    &   &   &   &   &   &   &    &    &    &    &   \\ \cline{2-9}
\multicolumn{1}{l|}{$a_7$} & \multicolumn{1}{|l|}{{\color[HTML]{FD6864} $S_1$}} & \multicolumn{1}{l|}{{\color[HTML]{333333} $S_0$}} & \multicolumn{1}{l|}{{\color[HTML]{333333} $S_0$}} & \multicolumn{1}{l|}{{\color[HTML]{333333} $S_0$}} & \multicolumn{1}{l|}{{\color[HTML]{333333} $S_1$}} & \multicolumn{1}{l|}{{\color[HTML]{333333} $S_1$}} & \multicolumn{1}{l|}{{\color[HTML]{333333} $S_1$}} & \multicolumn{1}{l|}{{\color[HTML]{9698ED} $S_0$}} & $\longrightarrow$ & $p_{11}^2 p_{10}^2 p_{01} p_{00}^2$ &                       &                        \\ \cline{2-9}
    &   &   &   &   &   &   &    &    &    &    &   \\ \cline{2-9}
\multicolumn{1}{l|}{$a_8$} & \multicolumn{1}{|l|}{{\color[HTML]{FD6864} $S_1$}} & \multicolumn{1}{l|}{{\color[HTML]{333333} $S_0$}} & \multicolumn{1}{l|}{{\color[HTML]{333333} $S_0$}} & \multicolumn{1}{l|}{{\color[HTML]{333333} $S_1$}} & \multicolumn{1}{l|}{{\color[HTML]{333333} $S_1$}} & \multicolumn{1}{l|}{{\color[HTML]{333333} $S_1$}} & \multicolumn{1}{l|}{{\color[HTML]{333333} $S_0$}} & \multicolumn{1}{l|}{{\color[HTML]{9698ED} $S_0$}} & $\longrightarrow$ & $p_{11}^2 p_{10}^2 p_{01} p_{00}^2$ &                       &                        \\ \cline{2-9}
    &   &   &   &   &   &   &    &    &    &    &   \\ \cline{2-9}
\multicolumn{1}{l|}{$a_9$} & \multicolumn{1}{|l|}{{\color[HTML]{FD6864} $S_1$}} & \multicolumn{1}{l|}{{\color[HTML]{333333} $S_0$}} & \multicolumn{1}{l|}{{\color[HTML]{333333} $S_1$}} & \multicolumn{1}{l|}{{\color[HTML]{333333} $S_1$}} & \multicolumn{1}{l|}{{\color[HTML]{333333} $S_1$}} & \multicolumn{1}{l|}{{\color[HTML]{333333} $S_0$}} & \multicolumn{1}{l|}{{\color[HTML]{333333} $S_0$}} & \multicolumn{1}{l|}{{\color[HTML]{9698ED} $S_0$}} & $\longrightarrow$ & $p_{11}^2 p_{10}^2 p_{01} p_{00}^2$ & \multirow{-17}{*}{\Vast\}} & \multirow{-17}{*}{$j=2$} \\ \cline{2-9}
    &   &   &   &   &   &   &    &    &    &    &   \\ \cline{2-9}
\multicolumn{1}{l|}{} & \multicolumn{1}{|l|}{{\color[HTML]{FD6864} $S_1$}} & \multicolumn{1}{l|}{{\color[HTML]{333333} $S_0$}} & \multicolumn{1}{l|}{{\color[HTML]{333333} $S_1$}} & \multicolumn{1}{l|}{{\color[HTML]{333333} $S_1$}} & \multicolumn{1}{l|}{{\color[HTML]{333333} $S_0$}} & \multicolumn{1}{l|}{{\color[HTML]{333333} $S_0$}} & \multicolumn{1}{l|}{{\color[HTML]{333333} $S_1$}} & \multicolumn{1}{l|}{{\color[HTML]{9698ED} $S_0$}} & $\longrightarrow$ & $p_{11} p_{10}^3 p_{01}^2 p_{00}$ &                       &                        \\ \cline{2-9}
    &   &   &   &   &   &   &    &    &    &    &   \\ \cline{2-9}
\multicolumn{1}{l|}{} & \multicolumn{1}{|l|}{{\color[HTML]{FD6864} $S_1$}} & \multicolumn{1}{l|}{{\color[HTML]{333333} $S_0$}} & \multicolumn{1}{l|}{{\color[HTML]{333333} $S_1$}} & \multicolumn{1}{l|}{{\color[HTML]{333333} $S_1$}} & \multicolumn{1}{l|}{{\color[HTML]{333333} $S_0$}} & \multicolumn{1}{l|}{{\color[HTML]{333333} $S_1$}} & \multicolumn{1}{l|}{{\color[HTML]{333333} $S_0$}} & \multicolumn{1}{l|}{{\color[HTML]{9698ED} $S_0$}} & $\longrightarrow$ & $p_{11} p_{10}^3 p_{01}^2 p_{00}$ &                       &                        \\ \cline{2-9}
    &   &   &   &   &   &   &    &    &    &    &   \\ \cline{2-9}
\multicolumn{1}{l|}{} & \multicolumn{1}{|l|}{{\color[HTML]{FD6864} $S_1$}} & \multicolumn{1}{l|}{{\color[HTML]{333333} $S_1$}} & \multicolumn{1}{l|}{{\color[HTML]{333333} $S_0$}} & \multicolumn{1}{l|}{{\color[HTML]{333333} $S_1$}} & \multicolumn{1}{l|}{{\color[HTML]{333333} $S_0$}} & \multicolumn{1}{l|}{{\color[HTML]{333333} $S_0$}} & \multicolumn{1}{l|}{{\color[HTML]{333333} $S_1$}} & \multicolumn{1}{l|}{{\color[HTML]{9698ED} $S_0$}} & $\longrightarrow$ & $p_{11} p_{10}^3 p_{01}^2 p_{00}$ &                       &                        \\ \cline{2-9}
    &   &   &   &   &   &   &    &    &    &    &   \\ \cline{2-9}
\multicolumn{1}{l|}{} & \multicolumn{1}{|l|}{{\color[HTML]{FD6864} $S_1$}} & \multicolumn{1}{l|}{{\color[HTML]{333333} $S_1$}} & \multicolumn{1}{l|}{{\color[HTML]{333333} $S_0$}} & \multicolumn{1}{l|}{{\color[HTML]{333333} $S_1$}} & \multicolumn{1}{l|}{{\color[HTML]{333333} $S_0$}} & \multicolumn{1}{l|}{{\color[HTML]{333333} $S_1$}} & \multicolumn{1}{l|}{{\color[HTML]{333333} $S_0$}} & \multicolumn{1}{l|}{{\color[HTML]{9698ED} $S_0$}} & $\longrightarrow$ & $p_{11} p_{10}^3 p_{01}^2 p_{00}$ &                       &                        \\ \cline{2-9}
    &   &   &   &   &   &   &    &    &    &    &   \\ \cline{2-9}
\multicolumn{1}{l|}{} & \multicolumn{1}{|l|}{{\color[HTML]{FD6864} $S_1$}} & \multicolumn{1}{l|}{{\color[HTML]{333333} $S_1$}} & \multicolumn{1}{l|}{{\color[HTML]{333333} $S_0$}} & \multicolumn{1}{l|}{{\color[HTML]{333333} $S_0$}} & \multicolumn{1}{l|}{{\color[HTML]{333333} $S_1$}} & \multicolumn{1}{l|}{{\color[HTML]{333333} $S_0$}} & \multicolumn{1}{l|}{{\color[HTML]{333333} $S_1$}} & \multicolumn{1}{l|}{{\color[HTML]{9698ED} $S_0$}} & $\longrightarrow$ & $p_{11} p_{10}^3 p_{01}^2 p_{00}$ &                       &                        \\ \cline{2-9}
    &   &   &   &   &   &   &    &    &    &    &   \\ \cline{2-9}
\multicolumn{1}{l|}{} & \multicolumn{1}{|l|}{{\color[HTML]{FD6864} $S_1$}} & \multicolumn{1}{l|}{{\color[HTML]{333333} $S_0$}} & \multicolumn{1}{l|}{{\color[HTML]{333333} $S_0$}} & \multicolumn{1}{l|}{{\color[HTML]{333333} $S_1$}} & \multicolumn{1}{l|}{{\color[HTML]{333333} $S_0$}} & \multicolumn{1}{l|}{{\color[HTML]{333333} $S_1$}} & \multicolumn{1}{l|}{{\color[HTML]{333333} $S_1$}} & \multicolumn{1}{l|}{{\color[HTML]{9698ED} $S_0$}} & $\longrightarrow$ & $p_{11} p_{10}^3 p_{01}^2 p_{00}$ &                       &                        \\ \cline{2-9}
    &   &   &   &   &   &   &    &    &    &    &   \\ \cline{2-9}
\multicolumn{1}{l|}{} & \multicolumn{1}{|l|}{{\color[HTML]{FD6864} $S_1$}} & \multicolumn{1}{l|}{{\color[HTML]{333333} $S_0$}} & \multicolumn{1}{l|}{{\color[HTML]{333333} $S_0$}} & \multicolumn{1}{l|}{{\color[HTML]{333333} $S_1$}} & \multicolumn{1}{l|}{{\color[HTML]{333333} $S_1$}} & \multicolumn{1}{l|}{{\color[HTML]{333333} $S_0$}} & \multicolumn{1}{l|}{{\color[HTML]{333333} $S_1$}} & \multicolumn{1}{l|}{{\color[HTML]{9698ED} $S_0$}} & $\longrightarrow$ & $p_{11} p_{10}^3 p_{01}^2 p_{00}$ &                       &                        \\ \cline{2-9}
    &   &   &   &   &   &   &    &    &    &    &   \\ \cline{2-9}
\multicolumn{1}{l|}{} & \multicolumn{1}{|l|}{{\color[HTML]{FD6864} $S_1$}} & \multicolumn{1}{l|}{{\color[HTML]{333333} $S_0$}} & \multicolumn{1}{l|}{{\color[HTML]{333333} $S_1$}} & \multicolumn{1}{l|}{{\color[HTML]{333333} $S_0$}} & \multicolumn{1}{l|}{{\color[HTML]{333333} $S_1$}} & \multicolumn{1}{l|}{{\color[HTML]{333333} $S_1$}} & \multicolumn{1}{l|}{{\color[HTML]{333333} $S_0$}} & \multicolumn{1}{l|}{{\color[HTML]{9698ED} $S_0$}} & $\longrightarrow$ & $p_{11} p_{10}^3 p_{01}^2 p_{00}$ &                       &                        \\ \cline{2-9}
    &   &   &   &   &   &   &    &    &    &    &   \\ \cline{2-9}
\multicolumn{1}{l|}{} & \multicolumn{1}{|l|}{{\color[HTML]{FD6864} $S_1$}} & \multicolumn{1}{l|}{{\color[HTML]{333333} $S_0$}} & \multicolumn{1}{l|}{{\color[HTML]{333333} $S_1$}} & \multicolumn{1}{l|}{{\color[HTML]{333333} $S_0$}} & \multicolumn{1}{l|}{{\color[HTML]{333333} $S_0$}} & \multicolumn{1}{l|}{{\color[HTML]{333333} $S_1$}} & \multicolumn{1}{l|}{{\color[HTML]{333333} $S_1$}} & \multicolumn{1}{l|}{{\color[HTML]{9698ED} $S_0$}} & $\longrightarrow$ & $p_{11} p_{10}^3 p_{01}^2 p_{00}$ & \multirow{-17}{*}{\Vast\}} & \multirow{-17}{*}{$j=3$} \\ \cline{2-9}
    &   &   &   &   &   &   &    &    &    &    &   \\ \cline{2-9}
\multicolumn{1}{l|}{} & \multicolumn{1}{|l|}{{\color[HTML]{FD6864} $S_1$}} & \multicolumn{1}{l|}{{\color[HTML]{333333} $S_0$}} & \multicolumn{1}{l|}{{\color[HTML]{333333} $S_1$}} & \multicolumn{1}{l|}{{\color[HTML]{333333} $S_0$}} & \multicolumn{1}{l|}{{\color[HTML]{333333} $S_1$}} & \multicolumn{1}{l|}{{\color[HTML]{333333} $S_0$}} & \multicolumn{1}{l|}{{\color[HTML]{333333} $S_1$}} & \multicolumn{1}{l|}{{\color[HTML]{9698ED} $S_0$}} & $\longrightarrow$ & $p_{10}^4 p_{01}^3$ & \Big\}                    & $j=4$                    \\ \cline{2-9}
\end{tabular}}
\caption{All the possible state transition paths considering the initial state is $S_1$ and final state is $S_0$.}
\label{fig:fig_2}
\end{figure}

\section{Example}

To better understand the transition sets and possible combinations, consider the example of $k=4$ and $N = 8$. We enumerate all possible state transition paths for part $X$ of~\eqref{eq:eq_1}. Since $4 < \frac{8+1}{2} = 4.5$, the $X$ part of~\eqref{eq:eq_1} is:
\begin{align}
    \sum_{j=1}^{4} \binom{3}{j-1} p_{11}^{4-j} p_{10}^j \times \binom{3}{j-1} p_{01}^{j-1} p_{00}^{4-j} &= \underbrace{p_{11}^3 p_{10} \times p_{00}^3}_{j=1} + \underbrace{3p_{11}^2 p_{10}^2 \times 3p_{01}p_{11}^2}_{j=2} + \nonumber\\
    &\hspace*{1.5cm} \underbrace{3p_{11} p_{10}^3 \times 3p_{01}^2p_{11}}_{j=3} + \underbrace{p_{10}^4 \times p_{01}}_{j=4}
    \label{eq:eq_3}
\end{align}
Fig.~\ref{fig:fig_2} shows all possible state transition paths if the initial state is $S_1$ and the final state is $S_0$. It has already been seen that the final state in the $X$ part of the summation is $S_0$, so Fig.~\ref{fig:fig_2} illustrates all possible combinations for this scenario. For $j=1$, there are no $S_0 \rightarrow S_1$ transitions, so there is a single $S_1 \rightarrow S_1$ transition set spread over three $S_1 \rightarrow S_1$ transitions, giving the weak composition as $1$. Similarly, the logic applies to the weak composition of $S_0 \rightarrow S_0$ transitions. For $j=2$, there are nine combinations, as seen in Fig.~\ref{fig:fig_2} and verified by~\eqref{eq:eq_3}. Two $S_1 \rightarrow S_1$ transition sets are spread over two $S_1 \rightarrow S_1$ transitions, giving the weak composition $3$. The weak compositions of $2$ into $2$ parts are
\begin{enumerate}
    \item $(2,0)$: $2$ transitions in the first set, $0$ in the second.
    \item $(1,1)$: $1$ transition in the first set, $1$ in the second.
    \item $(0,2)$: $0$ transitions in the first set, $2$ in the second.
\end{enumerate}
From Fig.~\ref{fig:fig_2}, transition paths $a_2,a_3,a_6$ correspond to $(2,0)$, paths $a_1,a_4,a_5$ correspond to $(1,1)$, and paths $a_7,a_8,a_9$ correspond to $(0,2)$. Similarly, two $S_0 \rightarrow S_0$ transition sets are spread over two $S_0 \rightarrow S_0$ transitions, giving the weak composition as $3$. The placement of $(2,0)$, $(1,1)$, and $(0,2)$ $S_1 \rightarrow S_1$ transition sets, given there are two $S_0 \rightarrow S_0$ transition sets results in $3 \times 3 = 9$ total possible combinations. The same logic follows for $j=3$ and $j=4$.

\section{Conclusion}
In this note, we derived the exact probability that a specific state in a two-state Markov chain is visited exactly $k$ times after $N$ transitions. Our closed-form solution for $\mathbb{P} (N_l = k \mid N)$ provides a rigorous and comprehensive framework to analyze state visit frequencies, correcting previous incomplete results found in the literature. Through careful enumeration of state transitions and numerical validation, we demonstrated the applicability of our results to stochastic modeling. This work has potential implications in various domains, such as queuing systems, statistical physics, and reinforcement learning, where understanding state visit frequencies is essential. Future work can extend these results to multi-state Markov chains or investigate applications in broader stochastic processes. Furthermore, the derived probabilities may serve as a foundation for optimizing decision-making strategies in Markovian environments.

\bibliographystyle{APT}
\bibliography{references2}

\end{document}